\newcommand{\basetheorem}[3]{
	\newtheorem{#1}{#2}[#3]
	\newtheorem*{#1*}{#2}
	\expandafter\def\csname #1autorefname\endcsname{#2}
}
\newcommand{\maketheorem}[3]{
	\newaliascnt{#1}{#3}
	\newtheorem{#1}[#1]{#2}
	\aliascntresetthe{#1}
	\expandafter\def\csname #1autorefname\endcsname{#2}
	\newtheorem{#1*}{#2}
}
\newcommand{\A}{\mathcal{A}}
\newcommand{\F}{\mathcal{F}}
\newcommand{\SB}{\mathbf{SB}}
\newcommand{\BB}{\mathbf{B}}
\newcommand{\BS}{\mathbf{B}_{+}}
\newcommand{\Ht}[1]{H^{i}_{t}}
\newcommand{\perf}{\textup{perf}}
\newcommand{\ox}[1][X]{\mathcal{O}_{#1}}
\newcommand{\stacks}[1]{\cite[\href{https://stacks.math.columbia.edu/tag/#1}{Tag #1}]{stacks-project}}
\begin{document}
	
	\title{The Augmented Base Locus in Mixed Characteristic}

\author{Liam Stigant}
\address{Department of Mathematics, Imperial College London, 180 Queen's Gate, 
	London SW7 2AZ, UK} 
\email{l.stigant18@imperial.ac.uk}
\subjclass[2020]{14J30, 14J32, 14M22 14E30, 14G17, 14B05}
\date{\today}
\pagestyle{myheadings} \markboth{\hfill  Liam Stigant
	\hfill}{\hfill the Augmented Base Locus in Mixed Characteristices\hfill}
	\maketitle
	
	\begin{abstract}
		Let $L$ be a nef and big line bundle on a scheme $X$. It is well known that if $X$ is a projective over a field then the augmented base locus and the exceptional base locus agree. This result is extended to projective schemes over arbitrary excellent Noetherian bases, assuming the result holds in characteristic zero. In particular the result holds if the base is a mixed characteristic Dedekind domain or if $L$ is semiample in characteristic $0$.
	\end{abstract}

\section{Introduction}

The augmented base locus is well studied for schemes over a field. An important characterisation, first noted for smooth varieties of characteristic $0$ by Nakayame \cite{nakamaye2000stable}, is that for a nef line bundle $L$ the augmented base locus $\BS(L)$ agrees with the exceptional locus $\mathbb{E}(L)$. 

Since then the result has been shown to hold for projective schemes over a field, first in positive characteristic by Cascini-M\textsuperscript{c}Kernan-Musta\c{t}\v{a} \cite{cascini2014augmented}, and then for $\mathbb{R}$-divisors over any field by Birkar \cite{birkar2017augmented}. Similar results are given for non-nef divisors in \cite{ein2009restricted} and for K\"{a}hler manifolds in \cite{collins2015kahler}.

We make use of methods developed in \cite{witaszek2020keel} together with ideas from the positive characteristic proof to show that $\BS(L)=\mathbb{E}(L)$ for a nef line bundle on a projective scheme over an excellent Noetherian base, so long it holds true on the characteristic zero part the scheme. In particular the result holds in the following cases.

\begin{theorem}[\autoref{main1}]
Let $X$ be a projective scheme over an excellent Noetherian base $S$ with $L$ a nef line bundle on $X$. 
Suppose that one of the following holds:
\begin{enumerate}
	\item $S_{\mathbb{Q}}$ has dimension $0$;
	\item $L|_{X_{\mathbb{Q}}}$ is semiample;
\end{enumerate}

Then $\BS(L)=\mathbb{E}(L)$.
\end{theorem}

We also extend the semiampleness result of \cite{witaszek2020keel} to show that there is an equality of stable base loci when the characteristic $0$ part is semiample.
 
 \begin{theorem}[\autoref{main2}]
 	Suppose that $X$ is a projective scheme over an excellent Noetherian base with $L$ a nef line bundle on $X$. Then $\SB(L)=\SB(L|_{\mathbb{E}(L)})$ so long as $L|_{X_{\mathbb{Q}}}$ is semiample.
 \end{theorem}

\subsection*{Acknowledgements}
	This work was born from fruitful conversations with Fabio Bernasconi, Federico Bongiorno, Iacopo Brivio and Paolo Cascini. Thanks in particular to Fabio and Paolo for their comments and advice on earlier drafts. I am also grateful to the EPSRC for my funding. Thanks also to the referee for their careful reading of an earlier draft and pointing out several issues with it.\\
	\\
	Data sharing not applicable to this article as no datasets were generated or analysed.\\
	\\
	On behalf of all authors, the corresponding author states that there is no conflict of interest.
\section{Preliminaries}

We will work exclusively with line bundles. Since the schemes we work with need not be normal, line bundles are not the same as Cartier divisors, however we typically use the traditional notation for divisors as we still sometimes treat line bundles as Cartier divisors when appropriate. That is we write the tensor product of $L,L'$ as $L+L'$, $L^{\otimes k}$ is often written $kL$ and given $f:Y \to X$, then $f^{*}L=L|_{Y}$ is often written $\ox[Y](L)$, including for $Y=X, f=id$. 

Since the questions considered are local on the base, it suffices to work only with affine bases. In particular, for notational simplicity, $H^{i}(X,L)$ will often be used to denote the higher derived pushforwards of $L$ by $X \to S$. 
	
\begin{definition}
	Let $L$ be a line bundle on a projective Noetherian scheme $X$ over some Noetherian scheme $S$. Then base locus is given as 
	$$\BB(L)= \bigcap_{s \in H^{0}(X,L)} Z(s)_{red}$$
	where $Z(s)$ is the zero set of $s$ equipped with the obvious scheme structure. The stable base locus is then
	$$\SB(L)=\bigcap_{m \geq 0}\BB(mL).$$
	Fix an ample line bundle $A$. The augmented base locus is given as 
	$$\BS(L)=\bigcap_{m \geq 0}\SB(mL-A)$$
	and is independent of the choice of $A$.
	
\end{definition}

We could also write \[\BS(L)= \bigcap_{A \text{ ample, } m \geq 0}\SB(mL-A)\] for a definition that involves no choice of ample line bundle. By Noetherianity if we choose $m$ sufficiently large and divisible then in fact $\BS(L)=\SB(mL-A)$.

 \begin{definition}
	Let $L$ be a line bundle on a projective scheme $X$. The exceptional locus, $\mathbb{E}(L)$, is the union of integral subschemes on which $L$ is not big.
\end{definition}

The previous two definitions are invariant under scaling by $n \in \mathbb{N}_{\geq 0}$ and line bundles will frequently be replaced with higher multiples.

\begin{theorem}\cite{witaszek2020keel}[Theorem 1.10]\label{red}
	Suppose that $X$ is a projective scheme over an excellent Noetherian base $S$ and $L$ is a nef line bundle on $X$. Then if $L|_{X_{red}}$ and $L|_{X_{\mathbb{Q}}}$ are semiample so too is $L$.
\end{theorem}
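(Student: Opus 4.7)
The plan is to lift generating sections of $mL$ from $X_{\mathrm{red}}$ to $X$ using the nilpotent filtration of $\mathcal{O}_X$, and to control the resulting cohomological obstructions by separately treating the characteristic zero and positive characteristic components of $S$.

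Let $\mathcal{N}$ denote the nilradical of $\mathcal{O}_X$, so $\mathcal{N}^{N+1}=0$ by Noetherianity, and set $X_i := V(\mathcal{N}^{i+1})$, so that $X_{\mathrm{red}} = X_0 \subset \cdots \subset X_N = X$. Writing $\mathcal{Q}_i := \mathcal{N}^i/\mathcal{N}^{i+1}$, a coherent $\mathcal{O}_{X_{\mathrm{red}}}$-module, the short exact sequences
\[
0 \to \mathcal{Q}_i\otimes mL \to \mathcal{O}_{X_i}(mL) \to \mathcal{O}_{X_{i-1}}(mL) \to 0
\]
reduce the task of lifting global sections from $X_{i-1}$ to $X_i$ to producing an $m\gg 0$ for which
\[
H^1\bigl(X_{\mathrm{red}}, \mathcal{Q}_i\otimes mL\bigr) = 0 \quad\text{for all } i.
\]
Iterating up the filtration then lifts the semi-ampleness of $L|_{X_{\mathrm{red}}}$ to $X$.

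To produce this vanishing, I would combine Fujita-type vanishing with Keel's Frobenius trick. Since $L|_{X_{\mathrm{red}}}$ is semi-ample, there is a contraction $\pi: X_{\mathrm{red}} \to Y$ with $m_0 L|_{X_{\mathrm{red}}} = \pi^*A$ for $A$ ample on $Y$. On the positive characteristic fibers of $X \to S$, iterating Frobenius replaces $L$ by $p^e L$, which is ``arbitrarily positive'' relative to any fixed coherent sheaf, and combined with Fujita vanishing kills $H^1(X_{\mathrm{red}}, \mathcal{Q}_i\otimes mL)$; this is the essence of the Cascini-McKernan-Musta\c{t}\v{a} argument \cite{cascini2014augmented}. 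On the characteristic zero fibers Frobenius is unavailable, but the hypothesis that $L|_{X_{\mathbb{Q}}}$ is already semi-ample sidesteps the issue entirely: any obstruction to lifting sections on $X_{\mathbb{Q}}$ vanishes by assumption, and only the positive-characteristic obstructions remain to be killed.

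The main obstacle will be uniformity. One needs a single multiple $m$ that works simultaneously in every residue characteristic of $S$, yet Frobenius-based bounds are $p$-dependent and degenerate as $p$ varies over the infinitely many residue characteristics that can occur. Producing such a uniform $m$ is precisely the technical heart of the mixed-characteristic Keel theorem in \cite{witaszek2020keel}: the Frobenius argument has to be run in a way compatible with the characteristic zero hypothesis on $X_{\mathbb{Q}}$, after which fiberwise semi-ampleness is upgraded to global semi-ampleness via a standard cohomology-and-base-change argument on $X \to S$.
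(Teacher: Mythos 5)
There is a genuine gap, and it sits exactly where the proof has to do its work. First, note that the paper does not prove this statement at all: it is quoted from \cite{witaszek2020keel} (Theorem 1.10), so your sketch has to stand on its own, and its final paragraph --- where you say the uniformity problem ``is precisely the technical heart of the mixed-characteristic Keel theorem in \cite{witaszek2020keel}'' --- concedes the entire difficulty back to the result being proved. More concretely, the mechanism you propose for the lifting step fails. You want $m\gg 0$ with $H^1(X_{\mathrm{red}},\mathcal{Q}_i\otimes mL)=0$, but $L$ is only nef (semiample on $X_{\mathrm{red}}$), so no Fujita-type vanishing applies: Fujita/Keeler vanishing (Theorem \ref{{Keeler}} in this paper) needs a large \emph{ample} factor, and twisting by a high multiple of a nef bundle buys nothing. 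Writing $m_0L|_{X_{\mathrm{red}}}=\pi^*A$ with $\pi:X_{\mathrm{red}}\to Y$ the semiample contraction, the Leray spectral sequence gives a contribution $H^0\bigl(Y, R^1\pi_*\mathcal{Q}_i\otimes mA\bigr)$ to $H^1(X_{\mathrm{red}},\mathcal{Q}_i\otimes mm_0L)$, and this typically \emph{grows} with $m$ rather than vanishing; whenever $\pi$ contracts something on which $\mathcal{Q}_i$ has higher cohomology, the obstruction group never dies. Indeed, if such vanishing were available the theorem would be elementary and would hold verbatim over fields of characteristic zero, where it is false. The actual Keel/Cascini--McKernan--Musta\c{t}\v{a} mechanism is not to kill the obstruction \emph{group} but to kill the particular obstruction \emph{class}: modulo a square-zero ideal, local lifts $\tilde{s}_i$ of a section $s$ satisfy $\tilde{s}_i^{\,p}-\tilde{s}_j^{\,p}=(\tilde{s}_i-\tilde{s}_j)^p+p(\cdots)=0$ in characteristic $p$, so $s^{p^e}$ lifts with no cohomological input. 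The term $p(\cdots)$ is exactly what does not vanish in mixed characteristic, and handling it is the content of Witaszek's proof.

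Two further assertions compound the problem. The claim that ``iterating Frobenius replaces $L$ by $p^eL$, which is arbitrarily positive relative to any fixed coherent sheaf'' is not true: $p^eL$ is still merely nef, and Frobenius does not convert nefness into the kind of positivity that feeds a vanishing theorem. And the closing step --- separating the characteristic-zero and positive-characteristic ``fibers'' and then upgrading ``fiberwise semi-ampleness to global semi-ampleness via a standard cohomology-and-base-change argument'' --- does not exist as a standard argument: $X$ is a single connected scheme over a mixed-characteristic base, the obstruction modules $H^1(X_{\mathrm{red}},\mathcal{Q}_i\otimes mL)$ do not decompose by residue characteristic, and the assumption that $L|_{X_{\mathbb{Q}}}$ is semiample gives sections only over the generic part; extending and gluing them with characteristic-$p$ data (killing denominators, uniformity in $p$, thickenings of $X_{\mathbb{Q}}$ in $X$) is precisely the hard part of \cite{witaszek2020keel} and is not recovered by your outline. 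As written, the proposal is a plan whose key vanishing step is false and whose remaining steps defer to the theorem it is meant to prove.
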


\begin{theorem}\cite{keeler2003ample}[Theorem 1.5]\label{{Keeler}}
	Let $X$ be a projective scheme over a Noetherian ring, $\A$ an ample line bundle and $\F$ a coherent sheaf. Then there is some $m_{0}$ with 
	
	\[H^{i}(X, \F \otimes \A^{m} \otimes \mathcal{N})=0\]
	for all $i>0, m \geq m_{0}$ and all nef line bundles $\mathcal{N}$.
\end{theorem}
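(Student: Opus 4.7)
This is Fujita's vanishing theorem in the Noetherian setting: a uniform strengthening of Serre vanishing over all nef twists. I would prove it by induction on $d = \dim \operatorname{supp}(\mathcal{F})$, which is finite since $X$ is Noetherian.

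The base case $d = 0$ is automatic: a coherent sheaf with zero-dimensional support has no higher cohomology. For the inductive step, replace $\mathcal{A}$ by a sufficiently high power to assume it is very ample. A prime-avoidance argument on the finitely many associated points of $\mathcal{F}$---possibly after a further twist to guarantee enough sections over small residue fields---produces a section $s \in H^0(X, \mathcal{A})$ which is a non-zero divisor on $\mathcal{F}$. This yields
\[
0 \to \mathcal{F} \xrightarrow{\cdot s} \mathcal{F} \otimes \mathcal{A} \to \mathcal{F}' \to 0
\]
with $\dim \operatorname{supp}(\mathcal{F}') < d$. Tensoring by $\mathcal{A}^m \otimes \mathcal{N}$ for $\mathcal{N}$ nef and applying the long exact sequence, the inductive hypothesis applied uniformly to $\mathcal{F}'$ gives $H^i(X, \mathcal{F}' \otimes \mathcal{A}^m \otimes \mathcal{N}) = 0$ for all $i > 0$, all nef $\mathcal{N}$, and all $m \geq m_1$ independent of $\mathcal{N}$. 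This produces uniform injections
\[
H^i(X, \mathcal{F} \otimes \mathcal{A}^m \otimes \mathcal{N}) \hookrightarrow H^i(X, \mathcal{F} \otimes \mathcal{A}^{m+1} \otimes \mathcal{N}) \quad \text{for } i \geq 2.
\]

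Iterating gives $H^i(X, \mathcal{F} \otimes \mathcal{A}^m \otimes \mathcal{N}) \hookrightarrow H^i(X, \mathcal{F} \otimes \mathcal{A}^{m+k} \otimes \mathcal{N})$ for all $k \geq 0$. Crucially, this injection is uniform in $\mathcal{N}$ but we are allowed to choose $k$ depending on $\mathcal{N}$: classical Serre vanishing applied to the coherent sheaf $\mathcal{F} \otimes \mathcal{N}$ against the ample bundle $\mathcal{A}$ kills the right-hand side once $k$ is large (depending on $\mathcal{N}$), forcing the left-hand side to vanish at the uniform threshold $m_1$. This completes the argument for $i \geq 2$.

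The main obstacle is the $i = 1$ case, where the long exact sequence produces only surjections $H^1(X, \mathcal{F} \otimes \mathcal{A}^m \otimes \mathcal{N}) \twoheadrightarrow H^1(X, \mathcal{F} \otimes \mathcal{A}^{m+1} \otimes \mathcal{N})$ rather than injections, since the connecting map from $H^0(X, \mathcal{F}' \otimes \mathcal{A}^m \otimes \mathcal{N})$ need not vanish. Resolving this requires strengthening the inductive hypothesis to also control global generation, in the style of Castelnuovo--Mumford regularity with respect to $\mathcal{A}$, so that the cokernel of multiplication by $s$ can be pinned down uniformly in $\mathcal{N}$ and eliminated by combining Noetherian stabilization with Serre vanishing at an $\mathcal{N}$-dependent twist. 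This is precisely the technical heart of Keeler's treatment of uniform vanishing via ample filters.
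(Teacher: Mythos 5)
This statement is not proved in the paper at all; it is quoted verbatim from Keeler's \emph{Ample filters of invertible sheaves} (Theorem 1.5), the Noetherian version of Fujita's vanishing theorem. So the only question is whether your sketch actually constitutes a proof, and it does not: you have reproduced the soft half of Fujita's argument and left out the part that carries the content. The induction on $\dim\operatorname{supp}\mathcal{F}$, the choice of a section of a power of $\mathcal{A}$ that is a non-zerodivisor on $\mathcal{F}$, and the resulting injections
\[
H^{i}(X,\mathcal{F}\otimes\mathcal{A}^{m}\otimes\mathcal{N})\hookrightarrow H^{i}(X,\mathcal{F}\otimes\mathcal{A}^{m+k}\otimes\mathcal{N}),\qquad i\geq 2,
\]
combined with Serre vanishing at an $\mathcal{N}$-dependent twist, are all correct (and the point that $k$ may depend on $\mathcal{N}$ while the threshold $m_1$ does not is exactly right). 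But for $i=1$ the long exact sequence only gives surjections, and a surjection onto a group that eventually vanishes yields nothing; you say so yourself and then write that resolving it ``requires strengthening the inductive hypothesis\dots the technical heart of Keeler's treatment.'' That is an acknowledgement of the gap, not a proof. One must actually show that the connecting maps $H^{0}(\mathcal{F}'\otimes\mathcal{A}^{m}\otimes\mathcal{N})\to H^{1}(\mathcal{F}\otimes\mathcal{A}^{m-1}\otimes\mathcal{N})$ die uniformly in $\mathcal{N}$, which is where Fujita and Keeler prove a stronger statement by induction (uniform surjectivity of restriction/multiplication maps, respectively a uniform Castelnuovo--Mumford regularity bound with respect to $\mathcal{A}$); without carrying out that step the theorem is not established.

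The gap is not a technicality in the context of this paper: every application of the theorem here (the lifting-of-sections arguments in the two lemmas on stable and augmented base loci) uses precisely the vanishing of $H^{1}$, which is the one case your argument does not reach. Two smaller remarks: the base case should be phrased as vanishing of $H^{i}$ for $i>0$ for sheaves with zero-dimensional support (fine, but say why: the support is affine); and over an arbitrary Noetherian ring the non-zerodivisor section is only available for a suitable power $\mathcal{A}^{t}$, so the comparison runs between twists $m$ and $m+t$ --- harmless, and you half-acknowledge it, but it should be built into the iteration. If you want a complete self-contained argument, either follow Fujita's original scheme with the strengthened inductive statement, or simply cite Keeler as the paper does.
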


\begin{lemma}\cite{cascini2014augmented}[Lemma 2.2]\label{sec-growth}
	Let $X$ be an n-dimensional projective scheme over a field $k$ and $L$ a line bundle on $X$.
	For every coherent sheaf $\F$ on $X$, there is $C > 0$ such that $h^{0}(X, \F\otimes L^{m}) \leq C m^{n}$ for every $m \geq 1$.
\end{lemma}

\begin{lemma}\label{bigsecs}
	Let $X$ be a reduced projective scheme over a ring $R$ and $L,A$ line bundles on $X$ with $A$ ample. Then for large $m$ and general $s \in H^{0}(X,mL-A)$ and any irreducible component $Y$ of $X$ with $L|_{Y}$ big we have $Y \not \subseteq Z(s)$.
\end{lemma}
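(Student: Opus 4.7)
The plan is to use a Mayer--Vietoris reduction to construct, for each irreducible component $Y_i$ of $X$ on which $L$ is big, a section of $mL-A$ on $X$ which does not vanish on $Y_i$ but vanishes identically on all other components; a generic linear combination of these then provides the general section the lemma promises.

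Fix one such big component $Y = Y_i$ and let $Z$ denote the reduced union of the remaining components, with scheme-theoretic intersection $W := Y \cap Z$. Since $X$ is reduced, the Mayer--Vietoris sequence
\[
0 \to \mathcal{O}_X \to \mathcal{O}_Y \oplus \mathcal{O}_Z \to \mathcal{O}_W \to 0
\]
is exact. Twisting by $mL-A$ and taking global sections shows that any $\sigma \in H^0(Y,(mL-A)|_Y)$ with $\sigma|_W = 0$ assembles with the zero section in $H^0(Z,(mL-A)|_Z)$ to a global section $s_i \in H^0(X,mL-A)$ satisfying $s_i|_Y = \sigma$ and $s_i|_Z = 0$. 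So the task reduces to producing a non-zero $\sigma$ on $Y$ that restricts to zero on $W$.

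For this I would argue by a dimension count. Since $Y$ is irreducible and $W$ is a proper closed subscheme, $\dim W < \dim Y$. By bigness of $L|_Y$, for some fixed $k$ the line bundle $kL|_Y - A|_Y$ admits a non-zero section; multiplying by it gives $h^0(Y,(mL-A)|_Y) \geq h^0(Y,(m-k)L|_Y)$, whose right-hand side grows like $m^{\dim Y}$ by the definition of bigness. On the other hand the standard polynomial bound yields $h^0(W,(mL-A)|_W) = O(m^{\dim W})$, which is of strictly smaller order. For $m \gg 0$ the restriction map $H^0(Y,(mL-A)|_Y) \to H^0(W,(mL-A)|_W)$ therefore cannot be injective, producing the required $\sigma$ in its kernel.

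Applying this uniformly to each of the finitely many big components produces sections $s_1,\ldots,s_r \in H^0(X,mL-A)$ with $s_i|_{Y_i} \neq 0$ and $s_i|_{Y_j} = 0$ for $j \neq i$. A generic linear combination $s = \sum_i c_i s_i$ then satisfies $s|_{Y_i} = c_i \cdot s_i|_{Y_i} \neq 0$ for every $i$, so $Y_i \not\subseteq Z(s)$ for each big component. The point I expect to require the most care is the polynomial growth of $h^0$ in the non-normal, possibly mixed characteristic setting: the lower bound on $h^0(Y,(mL-A)|_Y)$ comes from standard characterisations of bigness via the absorption of $A$ sketched above, while the upper bound on $h^0(W,\cdot)$ is a Snapper-type estimate valid over any Noetherian base.
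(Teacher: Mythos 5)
The paper offers no proof of this lemma; it is imported verbatim from Lemma 2.3 of \cite{cascini2014augmented}, and your argument is essentially that standard proof: for each component $Y$ on which $L$ is big, produce a section of $(mL-A)|_Y$ vanishing on the scheme-theoretic intersection $W$ with the remaining components (equivalently, a section of the ideal sheaf of the other components twisted by $mL-A$), glue it with the zero section via the Mayer--Vietoris sequence, and combine over the big components. The construction is correct; two points deserve to be made explicit rather than waved at. First, the lower bound should not be attributed to ``the definition of bigness,'' which only gives a $\limsup$: since the conclusion is asserted for all large $m$, you need the standard upgrade via Kodaira's lemma (write some $kL|_Y$ as an ample plus an effective class, absorb the residues modulo $k$ into multiples of the ample) to get $h^{0}(Y,(m-k)L|_Y)\geq c\,m^{\dim Y}$ for \emph{every} sufficiently large $m$; likewise the existence of a nonzero section of $kL|_Y-A|_Y$ is Kodaira's lemma, not the definition. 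Second, ``generic linear combination'' presupposes an infinite ground field; your explicit section $\sum_i s_i$ already does the job, and the stated genericity follows by observing that the sections vanishing identically on a fixed big component form a proper subspace, of which $s_i$ witnesses properness. Finally, note that your dimension counts are vector-space arguments over a field, which matches the cited statement, whereas the paper later invokes the lemma over a general excellent Noetherian base, where one must first pass to fibres or reinterpret ``general''; that is a gap on the paper's side rather than yours, but worth keeping in mind.
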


\begin{proof}
	
	Let $f:X \to S$ be the structure morphism. Suppose for contradiction that $f_{*}\ox(mL-A) \to f_{*}\ox[Y](mL-A)$ is the zero map for infinitely many $m$.
	
	Let $W$ be the union of the other components of $X$ so that we have a short exact sequence 
	
	\[0 \to \ox \to \ox[Y]\oplus \ox[W] \to \ox[Y\cap W] \to 0\]
	
	where $Y,W$ are given the reduced subscheme structure. For convenience we write $Z=Y\cap W$
	
	Tensoring and pushing forwards we get 
	\[0 \to f_{*}\ox(mL-A)\to f_{*}\ox[Y](mL-A)\oplus f_{*}\ox[W](mL-A) \to f_{*}\ox[Z](mL-A) \]
	
	In particular if $f_{*}\ox(mL-A) \to f_{*}\ox[Y](mL-A)$ is the zero map, we must have an injection $ f_{*}\ox[Y](mL-A) \hookrightarrow f_{*}\ox[Z](mL-A)$. Let $V=f(Y)$ and $g=f|_{Y}:Y \to V$. Then we may view $\ox[Y](mL-A), \ox[Z](mL-A)$ as sheaves on $Y$, then there is a corresponding injection $g_{*}\ox[Y](mL-A) \hookrightarrow g_{*}\ox[Z](mL-A)$ since the pushforward is left exact. Since $Y$ is irreducible so too is $V$ and hence we may pull back to the generic point $\nu$ of $V$.
	
	Now we have that $Y_{\nu}$ is a projective scheme over $K(V)$ of dimension say $n$. Equally $Z_{\nu}$ is a closed subscheme of $Y_{\nu}$ of dimension at most $n-1$. We now find a contradiction by counting sections over $K(V)$.
	
	On the one hand we have an injection $$H^{0}(Y_{\nu},\ox[Y_{\nu}](mL-A)) \hookrightarrow H^{0}(Z_{\nu}, \ox[Z_{\nu}](mL-A)),$$ which ensures that there is $C > 0$ such that $h^{0}(Y_{\nu}, \ox[Y_{\nu}](mL-A)) \leq C m^{n-1}$ for every $m \geq 1$ by \autoref{sec-growth}. On the other, $kL|_{Y_{\nu}}$ is big, and $Y_{\nu}$ is integral, thus $h^{0}(Y_{\nu}, \ox[Y_{\nu}](mL-A))$ grows like $m^{n}$ by \cite[Lemma 4.2]{birkar2017augmented}. This is a contradiction and the result follows.		
\end{proof}

\begin{remark}\label{powers}
	
	When $X$ is a reduced scheme and $X=X_{1} \cup X_{2}$ (as topological spaces) for closed subschemes $X_{1},X_{2}$ we have a short exact sequence 
	\[0 \to \ox \to \ox[X_{1}]\oplus \ox[X_{2}] \to \ox[X_{1}\cap X_{2}] \to 0\]
	as used above. In particular if $L$ is a line bundle on $X$ with sections $s_{1},s_{2}$ on $X_{1},X_{2}$ respectively which agree on $X_{1}\cap X_{2}$ then they glue to a section of $L$ on $X$.
	
	This is not the case when $X$ is reducible. If $X_{j}$ are given by ideal schemes $I_{j}$ then it need not be the case that $I_{1} \cap I_{2} = 0$. However replacing $I_{1}$ with a higher power we may suppose that this is the case, see for instance \stacks{01YC}. In particular we may always choose subscheme structures such that the short exact sequence
	\[0 \to \ox \to \ox[X_{1}]\oplus \ox[X_{2}] \to \ox[X_{1}\cap X_{2}] \to 0\]
	still holds. When we work with components of a reducible scheme we can always chose the subscheme structure in this fashion, and in particular we will always be able to glue appropriate sections.
	
\end{remark}

\begin{lemma}\label{Blowup-close}\cite{eisenbud2000geometry}[Proposition IV-21]
	Let $X$ be a scheme and $Z \subseteq X$ a subscheme with $\pi\colon Y \to X$ the blowup of $X$ along $Z$. If $f\colon X' \to X$ is any morphism and we write $Z'=f^{-1}Z$, then the closure $W$ of $\pi_{X'}^{-1}(X'\setminus Z')$ inside $X' \times_{X} Y$ is exactly the blowup of $X'$ along $Z'$.
\end{lemma}

\begin{lemma}\label{Blowup-red}\stacks{0808}
	Let $X$ be a scheme. Let $I\subseteq \ox$ be a quasi-coherent sheaf of ideals. If $X$ is reduced, then the blowup $X'$ of $X$ along $I$ is reduced.
	
\end{lemma}

Together these tell us that 'the blowup of the reduction is the reduction of the blowup'. More precisely we have the following.

\begin{lemma}\label{Blowup}
	
	Let $X$ be a scheme and $Z$ a proper closed subscheme of $X_{red}$. Let $\pi:X' \to X$ be the blowup of $X$ along $Z$, viewed as a subscheme of $X$. Let $Y$ be the blowup of $X_{red}$ along $Z$, then we have isomorphisms
	
	\[Y \simeq X'\times_{X} X_{red} \simeq X'_{red}\]
	
\end{lemma}
\begin{proof}
	
	First we observe that $X'\times_{X} X_{red} \simeq X'_{red}$. Indeed if $f:Z \to X'$ is a morphism from a reduced scheme, then we have a composition $g=\pi \circ f: Z \to X$. And thus a unique induced morphism $Z \to X_{red}$. By definition this induces a unique morphism $Z \to X'\times_{X} X_{red}$ and hence $X'\times_{X} X_{red}$ satisfies the universal property of the reduced subscheme, ensuring that $X'\times_{X} X_{red} \simeq X'_{red}$.
	
	Now by \autoref{Blowup-close} we have that $Y$ is the closure of $(X_{red}\setminus Z)$ inside $X'\times_{X} X_{red}$. However $X_{red}\setminus Z$ is a dense subscheme and so $Y$ is precisely the reduced subscheme of $X'\times_{X} X_{red}$, but then in fact they are equal as $X'\times_{X} X_{red}$ is already reduced.

\end{proof}

More abstractly one can argue that \autoref{Blowup} holds by appealing directly to the universal properties of the reduction, \stacks{0356} and the blowup, \stacks{0806}.

\begin{lemma}[Elimination of Indeterminacy by blowups]\label{elim}
	Let $f: X \dashrightarrow Y$ be a rational map of $S$ schemes associated to an $S$-linear system $|V|\subseteq H^{0}(X,L)$ without fixed part, then there is $Z$ with maps $\phi_{1}:Z \to X$, $\phi_{2}:Z \to Y$ such that $\phi_{1}^{*}L=M+F$ for $M$ a line bundle globally generated by $\phi_{1}^{*}|V|$. Here $F \geq 0$ is such that $\ox[Y](-F)$ is a line bundle, $\phi_{1}(F)=\BB|V|$ as reduced schemes and $\phi_{2}=f\circ \phi_{1}$. Further we may construct $Z \to X$ as a blowup of $X$.
\end{lemma}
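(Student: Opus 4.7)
The approach is to construct $Z$ as the blowup of the \emph{base ideal} of the linear system $|V|$. First, I would form the base ideal $\mathfrak{b} \subseteq \ox$ as the image of the evaluation map $V \otimes_{\ox[S]} L^{-1} \to \ox$; by definition its zero locus with reduced structure is precisely $\BB|V|$. Let $\pi: Z \to X$ be the blowup of $X$ along $\mathfrak{b}$, so setting $\phi_{1} := \pi$ settles both the requirement that $\phi_{1}$ be a blowup and, by the universal property of blowups, the fact that $\pi^{-1}\mathfrak{b}\cdot \mathcal{O}_{Z} = \mathcal{O}_{Z}(-F)$ is invertible for some effective Cartier divisor $F$.

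The decomposition $\pi^{*}L = M + F$ with $M$ generated by $\pi^{*}|V|$ is then formal. Pulling back the surjection $V \otimes L^{-1} \twoheadrightarrow \mathfrak{b}$ along $\pi$ and tensoring with $\pi^{*}L$ produces a surjection $V \otimes \mathcal{O}_{Z} \twoheadrightarrow \pi^{*}L \otimes \mathcal{O}_{Z}(-F)$, so $M := \pi^{*}L - F$ is globally generated by $\pi^{*}|V|$ and $\pi^{*}L = M + F$ by construction. The equality $\phi_{1}(F) = \BB|V|$ as reduced subschemes is a standard property of the blowup: $\pi$ is an isomorphism over $X \setminus V(\mathfrak{b})$, so $F$ is set-theoretically contained in $\pi^{-1}V(\mathfrak{b})$, and the hypothesis that $|V|$ has no fixed part ensures $\mathfrak{b}$ does not collapse to the zero ideal on any component, which is what guarantees that $F$ surjects onto all of $V(\mathfrak{b}) = \BB|V|_{\text{red}}$.

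Finally, global generation of $M$ by $V$ furnishes an $S$-morphism $\phi : Z \to \mathbb{P}_{S}(V^{\vee})$ which, on the dense open $\pi^{-1}(X \setminus \BB|V|) \cong X \setminus \BB|V|$, coincides with the rational map defined by $|V|$. To land in $Y$, I would interpret the phrase "associated to $|V|$" as saying that $f$ factorises through this rational map to $\mathbb{P}_{S}(V^{\vee})$ via some $S$-morphism $\iota : Y \to \mathbb{P}_{S}(V^{\vee})$ (or most canonically by taking $Y$ to be the scheme-theoretic image of $f$, which carries such an $\iota$ as a closed embedding). Then $\phi$ factors through $\iota$, yielding $\phi_{2} : Z \to Y$ with $\phi_{2} = f \circ \phi_{1}$ on $\pi^{-1}(X\setminus\BB|V|)$, and hence everywhere $f$ is defined.

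The main obstacle I anticipate is precisely the last step: without being told explicitly how $Y$ relates to $\mathbb{P}_{S}(V^{\vee})$, one must unpack what "associated to $|V|$" means and produce the factorisation $\phi = \iota \circ \phi_{2}$. The clean remedy is to replace $Y$ by the scheme-theoretic closure of the image of $X \setminus \BB|V|$ in $\mathbb{P}_{S}(V^{\vee})$ and verify, using properness of this closure together with the fact that $\phi$ extends the same rational map, that $\phi$ factors through it; then one identifies this closure with $Y$. Every other claim in the lemma is a direct consequence of the blowup construction and the surjection $V \otimes L^{-1} \twoheadrightarrow \mathfrak{b}$.
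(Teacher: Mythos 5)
Your proposal is correct and follows essentially the same route as the paper: blow up the base ideal (the image of $V\otimes L^{-1}\to \mathcal{O}_{X}$), pull back and twist the surjection to get $M=\pi^{*}L-F$ globally generated by $\pi^{*}|V|$, and use global generation plus the isomorphism away from $F$ to produce $\phi_{2}$ with $\phi_{2}=f\circ\phi_{1}$. Your extra care in factoring the morphism to $\mathbb{P}_{S}(V^{\vee})$ through $Y$ only spells out what the paper leaves implicit in defining $\phi_{2}:=\phi_{\pi^{*}|V|}$.
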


\begin{proof}
	
	Consider the following morphism of line bundles
	$V \otimes L^{-1} \to \ox$
	and let $\mathcal{I}$ be the image. Then $\mathcal{I} \otimes L$ is the image of $V \otimes \ox \to L$, in particular the support of $\mathcal{I}$ is exactly $\BB|V|$.
	
	Let $\pi:Z \to X$ be the blowup of $X$ along $\mathcal{I}$. We then have $\pi^{-1}\mathcal{I}\cdot \ox[Z]=\ox[Z](-F)$ for some $F$ an effective Cartier divisor. Hence we have
	\[\pi^{*}(V\otimes L) \twoheadrightarrow \ox[Z](-F) \hookrightarrow \ox[Z] \]
	where the first map is surjective by right exactness of the pullback functor. Tensoring by $\pi^{*}L$ then gives the following.
	
	\[\pi^{*}(V\otimes \ox[z]) \twoheadrightarrow \pi^{*}L(-F) \hookrightarrow \pi^{*}L \]
	
	In particular the line bundle in the middle, which we may write $M$ is globally generated by sections indexed by $\pi^{*}|V|$ and we have $M=\pi^{*}L(-F)$ by construction. Clearly $\pi(F)$ is the support of $\mathcal{I}$, which is nothing but $\BB|V|$. Since $M$ is globally generated it defines a morphism $\phi_{2}:=\phi_{\pi^{*}|V|}:Z \to Y$ and as $\phi_{1}:=\pi$ is an isomorphism away from $F$ the sections in $\pi^{*}|V|$ agree with those of $|V|$ on this locus. Hence $\phi_{2}$ agrees with $f$ here, that is $\phi_{2}=f\circ \phi_{1}$ as required. 
\end{proof}

\begin{lemma}\label{ampall}
	
	Let $X$ be an $S$ scheme and let $H$ be a very ample divisor on $X$. Suppose that $|V|\subseteq H^{0}(X,H)$ is an $S$-linear system which induces a closed immersion $X \to \mathbb{P}^{V}_{S}$.
	
	Then for $k$ sufficiently large we have that $V^{\otimes k}=H^{0}(X,kH)$.
	
\end{lemma}

\begin{proof}
	Thought of as a subscheme of $\mathbb{P}^{V}_{S}$, $X$ is cut out by an ideal sheaf $\mathcal{I}$. Hence we have 
	\[0 \to \mathcal{I}\otimes \ox[\mathbb{P}_{S}^{V}](k) \to \ox[\mathbb{P}_{S}^{V}](k) \to \ox(kH) \to 0.\]
	Since $H^{1}(\mathbb{P}^{V}, \mathcal{I}\otimes \ox[\mathbb{P}^{V}](k))=0$ for large $k$, we get a surjection $$H^{0}(\mathbb{P}^{V},\ox[\mathbb{P}^{V}](k)) \to H^{0}(X,kH).$$ However, the image of this map is precisely $V^{\otimes k}$ since we have $H^{0}(\mathbb{P}^{V},\ox[\mathbb{P}^{V}](k))=\bigotimes_{1}^{k}H^{0}(\mathbb{P}^{V},\ox[\mathbb{P}^{V}](1))$.
	
\end{proof}

\begin{remark}
	
	They key point of this lemma is the following. Suppose we take $|V|$ as in \autoref{elim} on $X$. Then we have a blowup $\phi_{1}^{*} \colon Z \to X$ such that $\phi_{1}^{*}|V|$ is basepoint free inside $H^{0}(Z,M)$. Take the induced morphism $\phi_{2}\colon Z \to Y$ and let $H$ be the very ample divisor on $Y$ induced by $|V|$. Then we have $\phi_{2}^{*}H^{0}(Y,kH) \subseteq \phi_{1}^{*}|V|^{\otimes k}$ for $k>>1$.
	
	This may not be true for $k=1$, even without the resolution of indeterminacy. Consider for example $X=\mathbb{P}^{1}$ and $L=\ox(4)$. If we take $$|V|=<x^4,x^3y,y^3x,y^{4}>$$ then we get an induced morphism $X \to \mathbb{P}^{3}$. The image, $Y$, is not projectively normal however, since $X \to Y$ is an isomorphism but $\dim |V| =4$ and $\dim H^{0}(X,L)=5$. In this example $k=3$ suffices.
	
\end{remark}

\section{Stable Base Loci}

In this section we will examine the stable base locus of line bundles which are semiample over $\mathbb{Q}$. This is then applied to the case of a big and nef line bundle restricted to its exceptional locus.  We begin with an extension of \cite[Theorem 1.10]{witaszek2020keel}. The proof follows the same structure, however more care is needed to keep track of sections.

If $L$ is a line bundle on $X$, semiample over $\mathbb{Q}$, we would like to claim that $\SB(L)=\SB(L|_{X_{red}})$. If $L|_{X_{red}}$ is semiample then this follows from \cite[Thereom 1.10]{witaszek2020keel}. We would then like to prove the general case by blowing up the base locus of $L|_{X_{red}}$ and reducing to the case that the line bundle is semiample on the reduction. Unfortunately if $Y \to X$ is a blowup then the pullback map $H^{0}(X,L) \to H^{0}(Y,\pi^{*}L)$ is, in general, neither injective nor surjective if $X$ is not integral. It is the lack of surjectivity that causes the issues, since we ultimately wish to show the existence of sections on the original scheme.

Suppose for example $X$ is the union of two normal projective schemes $X_{1}$, $X_{2}$. Then if $\pi:Y \to X$ is the blowup of $X_{2}$, the map factors through the closed immersion $X_{1} \hookrightarrow X$. Of course if $L$ is a line bundle on $X$ then $H^{0}(X,L) \to H^{0}(X_{1},L|_{X_{1}})\simeq H^{0}(Y,\pi^{*}L)$ is typically not a surjection.

 The idea in \cite[Thereom 1.10]{witaszek2020keel} is essentially to show that $L$ is semiample by producing a candidate morphism via pushout. Then one can lift sections back to $L$ by building them from suitable sections of $L|_{X_{red}}$ and $L|_{X_{\mathbb{Q}}}$, up to perhaps replacing the line bundle with a higher power. The key remedy, then, is to show that if we blow up the base locus of $L|_{X_{red}}$ via $\pi:Y \to X$, we may build sections of $\pi^{*}L$ on $Y$ using only those coming from $X_{red}$ and $X_{\mathbb{Q}}$.\\

\begin{theorem}\label{BaseRed}
		Let $S$ be an excellent, Noetherian scheme, take $X$ a projective scheme over $S$ and $L$ a line bundle on $X$. Write $i:X_{red} \to X$ for the inclusion of the reduced scheme. Suppose that $L|_{X_{\mathbb{Q}}}$ is semiample. Then $\SB(L)=\SB(L|_{X_{red}})$.
\end{theorem}

\begin{proof}
	We always have $\SB(L|_{X_{red}}) \subseteq \SB(L)$ since we can pull back sections of $L$, so it suffices to show the converse. We may also freely localise on $S$ and assume that it is an affine, Noetherian $\mathbb{Z}_{(p)}$ scheme. After replacing $L$ with a sufficiently high mulitple, we assume that $\SB(L)=\BB(L)$, $\SB(L|_{X_{red}})=\BB(L|_{X_{red}})$ and $\SB(L|_{X_{\mathbb{Q}}})=\BB(L|_{X_{\mathbb{Q}}})$ as reduced schemes.\\
	\\
	\textbf{Step 1: Blow-up the base locus.}\\
	
	Fix a generating set $s_{i}$ of $H^{0}(X_{red},L|_{X_{red}})$. By \autoref{elim} the blowup $W\to X_{red}$ along a subscheme $Z$ eliminates the indeterminacy of $L_{red}$, where $Z=\BB(L|_{X_{red}})=\SB(L|_{X_{red}})$ as reduced schemes. Let $\pi:Y \to X$ be the blowup along $Z$, viewed here a subscheme of $X$. Then the reduction of $Y$ is $Y_{red}\simeq W$ by \autoref{Blowup}.
	
	Let $F$ be the exceptional divisor and $M = \pi^{*}L(-F)$. Note that since $L$ is semiample on $X_{\mathbb{Q}}$, we have that $Y_{\mathbb{Q}}=X_{\mathbb{Q}}$ and $M|_{Y_{\mathbb{Q}}}=L|_{X_{\mathbb{Q}}}$ under this identification. We fix a generating set $t_{i}$ of of $H^{0}(Y_{\mathbb{Q}},M|_{Y_{\mathbb{Q}}})$, which induces a morphism $\phi_{\mathbb{Q}}\colon Y_{\mathbb{Q}} \to Z'_{\mathbb{Q}}$.
		
	By definition the basis $s_{i}$ of $H^{0}(X_{red},L|_{X_{red}})$ now induces $\hat{s}_{i}$ in $H^{0}(Y_{red},M|_{Y_{red}})$ which globally generate the line bundle. These sections induce a morphism $\psi: Y_{red} \to Z$ over $S$. Note that this may not be the same as the morphism induced by the full basepoint free linear system $H^{0}(Y_{red},M|_{Y_{red}})$ since we need not have $H^{0}(Y_{red},M|_{Y_{red}})\simeq H^{0}(X_{red},L|_{X_{red}})$ when $X$ is not irreducible.
	
	We then have an induced morphism $Z_{\mathbb{Q}} \to Z'_{\mathbb{Q}}$ which is a finite universal homeomorphism by \stacks{02OG}. We write $S=\pi^{*}_{red}H^{0}(X,L|_{X_{red}}) \subseteq H^{0}(Y_{red},M|_{Y_{red}})$, which is generated by the $\hat{s}_{i}$ by construction. 
	
	Now by \cite[Theorem 1.7, Corollary 4.20 and Lemma 2.20]{witaszek2020keel}, there is a scheme $Z'$, a universal homeomorphism $Z \to Z'$ and a line bundle $H'$ on $Z$ such that the following diagram commutes at the level of line bundles.
	
	\[\begin{tikzcd}
	{(Y,M)}                                 & {(Y_{\mathbb{Q}},M|_{Y_{\mathbb{Q}}})} \arrow[ddd, bend left=70, "\phi_{\mathbb{Q}}"] \arrow[l]     \\
	{(Y_{red},M|_{Y_{red}})} \arrow[d, "\psi"] \arrow[u] & {(Y_{red, \mathbb{Q}},M|_{Y_{red, \mathbb{Q}}})} \arrow[d, "\psi_{\mathbb{Q}}"] \arrow[u] \arrow[l] \\
	{(Z,H)} \arrow[d]                       & {(Z_{\mathbb{Q}},H|_{Z_{\mathbb{Q}}})} \arrow[l] \arrow[d]                    \\
	{(Z',H')}                               & {(Z'_{\mathbb{Q}},H'|_{Z'_{\mathbb{Q}}})} \arrow[l]                            
	\end{tikzcd}\]\\
	\\
	\textbf{Step 2: Find compatible sections.}\\
	
	Since $\psi$ is not induced by the full linear system on $Y_{red}$, it need not be the case that sections of $H^{0}(Z,H)$ pull back to sections inside the linear system $S\subset H^{0}(Y_{red},M|_{Y_{red}})$ which defines $\psi$. By \autoref{ampall} however, we may replace $M,L,S,H,H'$ with higher multiples so that $\psi^{*}H^{0}(Z,H)\subseteq S$. and $\phi_{\mathbb{Q}}^{*}H^{0}(Z_{\mathbb{Q}}',H'|_{Z_{\mathbb{Q}}'})\subseteq H^{0}(Y_{\mathbb{Q}},M|_{Y_{\mathbb{Q}}})$. 
	Taking further powers as needed, we may suppose also that $H'$ is very ample.
	
	We fix $u_{i}$ a generating set for $H^{0}(Z',H')$, then let $v_{i}=u_{i}|_{Z}$ and $w_{i}=u_{i}|_{Z_\mathbb{Q}}$. By construction we have $\pi^{*}v_{i}\subseteq S$ so we can choose $x_{i}\in H^{0}(X_{red},L|_{X_{red}})$ with $\pi^{*}x_{i}=\psi^{*}v_{i}$. Similarly we have $y_{i} \in H^{0}(X_{\mathbb{Q}},L|_{X_{\mathbb{Q}}})=H^{0}(X_{\mathbb{Q}},M|_{X_{\mathbb{Q}}})$ with $\phi_{\mathbb{Q}}^{*}t_{i}=y_{i}$. Since the above diagram commutes we have the following identifications. $$\pi^{*}v_{i}|_{Y_{red,\mathbb{Q}}}=\psi_{\mathbb{Q}}^{*}(u_{i}|_{Z_{red,\mathbb{Q}}})=y_{i}|_{Y_{red,\mathbb{Q}}}$$
	
	Since $H'$ is very ample the $y_{i}$ must generate a basepoint free linear system. Similarly the $\phi^{*}v_{i}$ are basepoint free on $Y_{red}$. Then as $\pi^{-1}\colon X \dashrightarrow Y$ is an isomorphism away from $\SB(L|_{X_{red}})$, the $x_{i}$ are basepoint free away from it also.
	
	Finally note that since $H^{0}(X_{red,\mathbb{Q}},L|_{X_{red,\mathbb{Q}}}) \to H^{0}(Y_{red,\mathbb{Q}},M|_{Y_{red,\mathbb{Q}}})$ is an isomorphism, we must have $x_{i}|_{X_{red,\mathbb{Q}}}=y_{i}|_{X_{red,\mathbb{Q}}}$.\\
	\\
	\textbf{Step 3: Glue sections on the original scheme.}\\

	By \cite[Proposition 3.5]{witaszek2020keel}, we have the following commutative diagram. 
	
	\[
	\begin{tikzcd}
	{H^{0}(X,L)^{\perf}} \arrow[d] \arrow[r]                 & {H^{0}(X_{\mathbb{Q}},L|_{X_{\mathbb{Q}}})^{\perf}} \arrow[d] \\
	{H^{0}(X_{red},L|_{X_{red}})^{\perf}} \arrow[r] \arrow[r] & {H^{0}(X_{red,\mathbb{Q}},L|_{X_{red,\mathbb{Q}}})^{\perf}}  
	\end{tikzcd}	
	\]
	Hence we can again replace $L$ with a higher power, and $x_{i}$, $y_{i}$ with the corresponding multiples, such that there are $r_{i} \in H^{0}(X,L)$ with $r_{i}|_{X_{red}}=x_{i}$ and $r_{i}|_{X_{\mathbb{Q}}}=y_{i}$. Once again then $L$ is globally generated by the $r_{i}$ away from $\SB(L|_{X_{red}})$, so we must have that $\SB(L)\subseteq \SB(L|_{X_{red}})$ as claimed.
\end{proof}

\begin{remark}
	
	In principle the condition that $L|_{X_{\mathbb{Q}}}$ is semiample is not completely necessary. The blowup of $\BB(L|_{X_{red}})$, $\pi: Y \to X$ induces an injection $$H^{0}(X|_{red,\mathbb{Q}},L|_{X_{red,\mathbb{Q}}}) \to H^{0}(Y|_{red,\mathbb{Q}},L|_{Y_{red,\mathbb{Q}}})$$ which is sufficient to allow us to glue sections on the base. Much more care must be taken when replacing $L$ with a higher power in this case, however, to ensure that the pullback remains injective on the considered sections. 
		
	This would extend the result to the case that $L|_{X_{\mathbb{Q}}}$ becomes basepoint free after we blowup the base locus of $L|_{X_{red}}$. However, it is not clear how this condition could be verified in practice.
	
\end{remark}

We now consider the stable base locus of a big and nef line bundle on restriction to its exceptional locus, under the assumption that the characteristic $0$ part of the line bundle is semiample.

\begin{lemma}
	Let $L$ be a nef line bundle on $X$ projective over an excellent Noetherian base $S$ with and $D$ an effective Cartier divisor such that $L(-D)$ is an ample line bundle. If $L|_{D_{\mathbb{Q}}}$ is semiample then \[\SB(L)=\SB(L|_{D}).\]
\end{lemma}	

\begin{proof}
	Clearly $\SB(L) \subseteq D$ as $L$ is ample away from $D$ and we have $\SB(L|_{D}) \subseteq \SB(L)$ by restriction. Consider the following short exact sequence.
	
	\[0 \to \ox (kL-mD) \to \ox(kL) \to \ox[mD](kL) \to 0\]
	By \autoref{{Keeler}}, we may choose $m >>0$ such that $$H^{1}(\ox,kL-mD=mA+(k-m)L)=0$$ for $k \geq m$. Then by \autoref{BaseRed} and the semiampleness assumption, we have $\SB(L|_{D})=\SB(L|_{mD})$ and may pick $k>> m$ with $\SB(L|_{D})=B(kL|_{mD})$ as reduced subschemes of $X$. In particular if $P$ is any closed point of $D$, we may find a section of $kL|_{mD}$ avoiding it, and then lift this to a section of $kL$. Thus $\SB(L)\cap D \subseteq \SB(L|_{D}) $ and the result follows.

\end{proof}

\begin{lemma}
	Suppose that $X$ is a reduced projective scheme over an excellent Noetherian base. Suppose that $L,A$ are line bundles with $L$ nef and $A$ ample. Take $Z=Z(s)$ for some section $s$ of $L-A$. If $L|_{D_{\mathbb{Q}}}$ is semiample then $\SB(L)=\SB(L|_{Z})$.
\end{lemma}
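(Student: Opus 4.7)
The plan is to imitate the proof of the preceding lemma, with $Z$ in place of the Cartier divisor $D$ there. The inclusion $\SB(L|_Z) \subseteq \SB(L)$ is immediate from restriction. For $\SB(L) \subseteq Z$ as reduced schemes, note that for any closed point $P \notin Z$ one has $s^k(P) \ne 0$, and for $k$ large enough ampleness of $A$ supplies some $t \in H^0(X, kA)$ with $t(P) \ne 0$; then $s^k \otimes t \in H^0(X, kL)$ does not vanish at $P$, so $P \notin \SB(L)$.

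The substantive content is to show $Z \setminus \SB(L|_Z) \subseteq X \setminus \SB(L)$. I would use the short exact sequence
\[
0 \to \ox((k-m)L + mA) \xrightarrow{\cdot s^m} \ox(kL) \to \ox[Z_m](kL) \to 0,
\]
where $Z_m$ denotes the subscheme cut out by $s^m$, which has the same underlying reduced scheme as $Z$. Theorem \ref{{Keeler}} applied with coherent sheaf $\ox$, ample bundle $A$, and nef bundle $(k-m)L$ yields $H^1(X, (k-m)L + mA) = 0$ for $m \geq m_0$ and all $k \geq m$, so the restriction $H^0(X, kL) \to H^0(Z_m, kL|_{Z_m})$ is surjective. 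The hypothesis together with Theorem \ref{BaseRed} (applied both to $Z$ and to $Z_m$, using $(Z_m)_{red}=Z_{red}$) gives $\SB(L|_Z) = \SB(L|_{Z_m})$. Choosing $k$ sufficiently large and divisible so that $\SB(L|_{Z_m}) = \BB(kL|_{Z_m})$, any $P \in Z \setminus \SB(L|_Z)$ admits a section of $kL|_{Z_m}$ nonvanishing at $P$, which then lifts to a section of $kL$ on $X$ nonvanishing at $P$, so $P \notin \SB(L)$.

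The main obstacle is that the displayed sequence is only exact when $s^m$ is injective, equivalently (since $X$ is reduced) when $s$ does not vanish on any irreducible component of $X$. If $s$ vanishes on a component $Y$, then $Y \subseteq Z$ and the kernel of $\cdot s^m$ is supported on the union $X_0$ of such components. I would handle this by decomposing $X = X_0 \cup X_1$ with $X_1$ the union of the remaining components, applying the previous lemma to the restriction $Z_1 = Z|_{X_1}$ (which is a genuine Cartier divisor with $L(-Z_1)|_{X_1} \cong A|_{X_1}$ ample) to compare $\SB(L|_{X_1})$ with $\SB(L|_{Z_1})$, and patching with the observation that $X_0 \subseteq Z$ so the two loci agree there. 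The reducedness of $X$ is what makes this component-wise decomposition clean and what justifies appealing to Theorem \ref{BaseRed} on $Z$.
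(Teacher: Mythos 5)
Your first half is sound and is essentially the paper's own mechanism: the sequence twisted by $s^m$, Keeler-type vanishing to make $H^{0}(X,kL)\to H^{0}(Z_m,kL|_{Z_m})$ surjective, and Theorem \ref{BaseRed} to replace $\SB(L|_{Z_m})$ by $\SB(L|_{Z})$; and you are right that the only obstacle is $s$ vanishing on whole components. The gap is in the repair of that obstacle. Knowing $\SB(L|_{X_1})=\SB(L|_{Z_1})$ from the previous lemma together with ``$X_0\subseteq Z$, so the two loci agree there'' does not yield $\SB(L)=\SB(L|_Z)$: stable base loci do not decompose over the components of $X$, since a section of $kL$ on $X_1$ (or on $Z$) need not extend to $X$. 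Thus a priori $\SB(L)\cap X_1$ may be strictly larger than $\SB(L|_{X_1})$, and the assertion that $\SB(L)$ and $\SB(L|_Z)$ agree on $X_0$ is exactly what has to be proved at those points, not an observation. What is needed, and what the paper's proof supplies, is a section-level gluing: given $P\in Z\setminus\SB(L|_Z)$, choose (using Theorem \ref{BaseRed} and large divisible $k$) a section $t$ of $kL$ on the thickening $mZ$ with $t(P)\neq 0$; restrict $t$ to $mD=mZ\cap X_1$, lift it by your exact sequence on $X_1$ (where $s$ is a non-zerodivisor, so there is no left-exactness issue) to a section $t'$ of $kL|_{X_1}$, and glue $t'$ with $t|_{X_0}$ along the scheme-theoretic intersection $X_0\cap X_1$, which is contained in $D\subseteq mD$ because $s$ vanishes identically on the reduced scheme $X_0$. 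The glued global section of $kL$ is nonzero at $P$ whether $P$ lies in $X_0$ or in $Z_1$, and this is what gives $\SB(L)\subseteq\SB(L|_Z)$. Your outline never constructs a global section on $X$, which is the actual content of the lemma beyond the previous one.

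A secondary problem with routing through the previous lemma on $(X_1,Z_1)$: that lemma needs its hypothesis for $Z_1$, namely $\SB(L|_{Z_{1,\mathbb{Q}}})=\SB(L|_{Z_{1,red,\mathbb{Q}}})$, which is not what you are given (the hypothesis concerns $Z$) and does not formally follow from it. In the eventual application (where $L|_{X_{\mathbb{Q}}}$ is semiample) this is harmless, but for the lemma as stated you should argue as above, working only with $Z$ and its thickenings rather than with $Z_1$.
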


\begin{proof}
	Let $Y_{1}$ be the union of components of $X$ contained in $Z$ and $Y_{2}$ the union of those not contained in $Z$. If either are empty the result is clear so suppose otherwise. As in \autoref{powers}, we give them a subscheme structure and replace $L,A,s$ with higher powers to ensure we may glue appropriate sections.
	
	Let $D=Z \cap Y_{2}$ and $L_{2}=L|_{Y_{2}}$. By assumption $D$ is a Cartier divisor on $Y_{2}$ with $D=(L-A)|_{Y_{2}}$. As above we have 
	\[0 \to \ox[Y_{2}] (kL_{2}-mD) \to \ox[Y_{2}](kL_{2}) \to \ox[mD](kL_{2}) \to 0\]
	and choosing $k > m >>0$ this allows us to lift sections from $kL_{2}|_{mD}$. We then have $\BB(kL|_{mZ})=\SB(L|_{mZ})=\SB(L|_{Z})= \BB(kL|_{Z})$ for large enough $k$ by \autoref{BaseRed}. Now, given any section $t$ of $kL|_{mZ}$ we may restrict it to $D$ and then lift it to $t'$ a section of $kL_{2}$. By construction $t'$ agrees with $t$ on $D=Z \cap Y_{2}$, and since $Y_{1} \subseteq Z$ it follows we may glue $t|_{Y_{1}}$ and $t'$. In particular then we must have $\SB(L)\cap Z = \SB(L|_{Z})$, but since $L$ is ample away from $Z$ the result follows.
\end{proof}

\begin{corollary}\label{main2}
	Suppose that $X$ is a projective scheme over an excellent Noetherian base with $L$ a nef line bundle on $X$. Then $\SB(L)=\SB(L|_{\mathbb{E}(L)})$ so long as $L|_{X_{\mathbb{Q}}}$ is semiample.
\end{corollary}
\begin{proof}
	By Noetherian induction we may suppose that this holds on every proper closed subscheme. By \autoref{BaseRed} we may suppose that $X$ is reduced and then we may also assume $\mathbb{E}(L) \neq X$, else the result is trivial. Let $X'$ be the union of components on which $L$ is big and $X''$ the union of those on which it is not.
	
	Let $A$ be an ample line bundle and $s$ a general section of $mL-A$, then $Z=Z(s)$ must contain $\mathbb{E}(L)$. By \autoref{bigsecs} we have that $Z \neq X$, since $s$ does not vanish on any component of $X'$. Since $\mathbb{E}(L|_{Z}) = \mathbb{E}(L)\cap Z= \mathbb{E}(L)$ we must have $\SB(L)=\SB(L|_{Z})=\SB(L|_{\mathbb{E}(L)})$ by the induction hypothesis. 
\end{proof}

\section{Augmented Base Loci}

This section considers the augmented base locus of a nef line bundle and its relation to the exceptional locus. This is done largely under the assumption that they are equal in characteristic $0$, before showing this assumption is satisfied in two key cases.

\begin{lemma}\label{amp}
	Let $X$ be a projective scheme, $L$ a line bundle and $A$ a very ample line bundle. Then for $m>>0$ large and divisible we have that 
	
	$$\BS(L)=\BB(mL-A).$$
\end{lemma}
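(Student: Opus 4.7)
The inclusion $\BS(L) \subseteq \BB(mL - A)$ is immediate for every $m$ from the definitions, since $\BS(L) \subseteq \SB(mL - A) \subseteq \BB(mL - A)$. All the work lies in the reverse inclusion.

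My plan is to apply the remark following the definition of $\BS(L)$: by Noetherianity there exists $m_0$ with $\BS(L) = \SB(m_0 L - A)$. Writing $M := m_0 L - A$, I would next apply Noetherianity to $\SB(M) = \bigcap_{k \geq 1} \BB(kM)$. The key auxiliary fact is the monotonicity $\BB(kM) \subseteq \BB(k_0 M)$ whenever $k_0 \mid k$, obtained by observing that a section $s \in H^0(k_0 M)$ gives rise to $s^{k/k_0} \in H^0(kM)$ with the same reduced zero locus. Hence the descending chain $\BB(k k_0 M)$ stabilises and agrees with $\SB(M)$, yielding an integer $n_0 \geq 1$ such that $\BS(L) = \BB(n_0 M)$.

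The decisive step is to reconcile the coefficient $n_0$ on $A$ appearing in $n_0 M = n_0 m_0 L - n_0 A$ with the coefficient $1$ in the target $mL - A$. For any $m$ of the form $m = k m_0$ with $k \geq 2$ a multiple of $n_0$, I would decompose
\[ mL - A \;=\; k(m_0 L - A) + (k - 1)A \;=\; kM + (k-1)A. \]
Since $A$ is very ample and $k - 1 \geq 1$, the bundle $(k-1)A$ is globally generated. Combined with the elementary observation that $\BB(M_1 + \mathcal{N}) \subseteq \BB(M_1)$ whenever $\mathcal{N}$ is globally generated (proved by tensoring together nonvanishing sections of $M_1$ and $\mathcal{N}$ at any given point outside $\BB(M_1)$), this gives $\BB(mL - A) \subseteq \BB(kM)$. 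Monotonicity then yields $\BB(kM) \subseteq \BB(n_0 M) = \BS(L)$, producing the desired reverse containment for all sufficiently large multiples of $d := n_0 m_0$.

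The only real subtlety is the mismatch between the coefficient of $A$ emerging from the Noetherian stabilisation of $\SB$ and the coefficient $1$ required in the statement; the globally generated splitting $mL - A = kM + (k-1)A$ resolves this mismatch, and it is at this point—and only this point—that the very ampleness (or at least the global generation) of $A$ is essential.
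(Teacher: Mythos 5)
Your proof is correct and is essentially the paper's own argument: both reduce via Noetherian stabilisation to $\BS(L)=\BB(k(m_0L-A))$ and then compare with $\BB(km_0L-A)$ by writing $km_0L-A=k(m_0L-A)+(k-1)A$ and using that $(k-1)A$ is globally generated, together with the trivial inclusion $\BS(L)\subseteq\BB(km_0L-A)$. No substantive difference beyond your more explicit spelling-out of the divisibility/monotonicity bookkeeping.
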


\begin{proof}
	Certainly we have $n$ such that $\BS(L)=\SB(nL-A)$ and thus also $\BS(L)=\BB(nkL-kA)$ for large divisible $k$. Conversely however $\BB(nkL-A)\subseteq \BB(nkL-kA)$ as $A$ is very ample. Since $\BS(L)\subseteq \BB(nkL-A)$ by definition, taking $m=kn$ suffices.
\end{proof}

\begin{lemma}
	Let $X$ be a projective scheme over an excellent Noetherian base with $L$ a nef line bundle on $X$. If $D$ is an effective Cartier divisor with $L(-D)$ an ample line bundle and $\BS(L|_{kD})=\BS(L|_{D})$ for all $k > 0$ then $\BS(L)=\BS(L|_{D})$.
\end{lemma}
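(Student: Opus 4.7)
The plan is to adapt the argument of the analogous earlier lemma for stable base loci (the unnumbered lemma showing $\SB(L) = \SB(L|_D)$), swapping $\SB$ for $\BS$ and using Lemma~\ref{amp} together with the preceding lemma ($\BS(L) = \BS(L|_{X_{\text{red}}})$) to reduce everything to a single cohomology vanishing. The easy inclusion $\BS(L|_D) \subseteq \BS(L)$ is immediate from restriction of sections, and since $(L - D) + D = L$ with $L - D$ ample, taking $A = L - D$ in the definition of $\BS(L)$ yields $\BS(L) \subseteq \SB(D) \subseteq D$ as reduced subschemes.

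For the reverse inclusion, fix a very ample line bundle $A$ on $X$ and a positive integer $n$ (to be chosen below). By Lemma~\ref{amp} applied to $L$ and to $L|_{nD}$, I can choose a single $m$ large and divisible such that simultaneously
\[ \BS(L) = \BB(mL - A), \qquad \BS(L|_{nD}) = \BB(mL|_{nD} - A|_{nD}). \]
Since $(nD)_{\text{red}} = D_{\text{red}}$, the preceding lemma gives $\BS(L|_{nD}) = \BS(L|_D)$, so the two reduced base loci $\BB((mL - A)|_{nD})$ and $\BB((mL - A)|_D)$ coincide.

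The heart of the argument is a Keeler-style vanishing coming from the identity
\[ mL - A - nD \;=\; n(L - D) - A + (m - n)L. \]
Applying Theorem~\ref{{Keeler}} with the fixed ample $\A = L - D$, the coherent sheaf $\F = \ox(-A)$, and the nef line bundle $\mathcal{N} = (m - n)L$, I obtain an $n_0$ with $H^1(X, mL - A - nD) = 0$ whenever $n \geq n_0$ and $m \geq n$. Fix such an $n$, then enlarge $m \geq n$ so that the conclusions of Lemma~\ref{amp} above still hold. The short exact sequence
\[ 0 \to \ox(mL - A - nD) \to \ox(mL - A) \to \mathcal{O}_{nD}\bigl((mL - A)|_{nD}\bigr) \to 0 \]
then gives surjectivity $H^0(X, mL - A) \twoheadrightarrow H^0(nD, (mL - A)|_{nD})$. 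For $x \in D \setminus \BS(L|_D) = D \setminus \BB((mL - A)|_{nD})$, a non-vanishing section on $nD$ lifts to a non-vanishing section on $X$, hence $x \notin \BB(mL - A) = \BS(L)$, completing $\BS(L) \subseteq \BS(L|_D)$.

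The main obstacle is arranging the cohomology vanishing so that Theorem~\ref{{Keeler}} actually applies: the identity $mL - A - nD = n(L - D) - A + (m - n)L$ is what makes this work, since it packages all $n$ copies of the ample line bundle $L - D$ into the $\A^n$ slot while the remaining nef part $(m - n)L$ fills $\mathcal{N}$. A direct attempt to lift sections from $D$ itself (the $n = 1$ case) would leave only one copy of $L - D$ available for the ample exponent, which is too weak for Keeler's Theorem; the thickening $nD$ with $n$ large, combined with the $\BS$-version of the reducedness lemma to identify $\BS(L|_{nD})$ with $\BS(L|_D)$, is the device that makes the argument go through.
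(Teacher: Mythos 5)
Your proposal is correct and takes essentially the same route as the paper's proof: both lift sections from a thickening $nD$ of $D$ via a uniform Fujita/Keeler-type $H^1$ vanishing, identify $\BS(L|_{nD})$ with $\BS(L|_{D})$ using the preceding lemma on reduced structures, and pin down the augmented base loci as single base loci $\BB$ via Lemma \ref{amp}. The only difference is bookkeeping in how the positivity is arranged: the paper replaces $L$ by a multiple so that $L-D=2A$ and scales $A$ along with the thickening, whereas you keep $A$ fixed in the coherent-sheaf slot and take powers of the ample $L-D$ itself, which is equally valid.
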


\begin{proof}
	Since $D=L-A$ we must have that $\BS(L) \subseteq D$, and conversely $\BS(L|_{D}) \subseteq \BS(L)$ since we may always pullback sections. It suffices to show then that $\BS(L) \subseteq \BS(L|_{D})$ and we need only check this on points inside $D$.
	
	By taking multiples we may freely assume $L-D=2A$ for $A$ very ample. Consider the short exact sequence
	\[0 \to \ox(k(mL-D-A))\to \ox(kmL-kA) \to \ox[kD](mkL-kA) \to 0.\]
	We have that $H^{1}(X,kmL-kD-kA)=H^{1}(X,(k-1)mL+kA)=0$ for $k >>0$ which we now fix and for all $m >0$.
	
	In particular we may lift sections from $\ox[kD](mkL-kA)$ for any $m>0$. By assumption we have $\BS(L|_{kD})=\BS(L|_{D})$ and so we have that $\BS(L|_{D})=\BB((mkL-kA)_{kD})$ for sufficiently large and divisible $m$. Given this choice of $m$ we may lift sections avoiding $\BB((mkL-kA)_{kD})$ and thus $\BS(L) \subseteq \BS(L|_{D})$.
\end{proof}

\begin{lemma}\label{reduce}
	Let $X$ be a projective scheme over an excellent Noetherian base with $L$ a nef line bundle on $X$ and $A$ an ample line bundle. If $Z=Z(s)$ for some $s$ a section of $mL-A$ and $\BS(L|_{kZ})=\BS(L|_{Z})$ for all $k \geq 1$ then $\BS(L)=\BS(L|_{Z})$.
\end{lemma}

\begin{proof}
	As above we need only prove that $\BS(L)\subseteq \BS(L|_{Z})$.	Let $Y_{1}$ the union of components on which $Z$ is non-zero and $Y_{2}$ the union of those on which it is not. From above we may assume that $Y_{1} \neq \emptyset$ else $Z_{red}=X_{red}$ and the result follows. Let $D=Z|_{Y_{1}}$ and write $L|_{Y_{1}}=L', A|_{Y_{1}}=A'$. As in the proof of previous theorem, after possibly replacing $L,D$ with a multiples, we may find $k$ such that every section of $(mkL'-kA')|_{kD}$ lifts to one of $mkL'-kA'$. 
	
	Similarly for $n>>0$ sufficiently divisible we have $\mathbf{B}((nL-kA)|_{kZ})=\BS(L|_{kZ})=\BS(L|_{Z})$ by assumption. Taking any section $s$ of $(mkL-kA)|_{kZ}$, we may restrict to a section on $kD$ and then lift to $s'$ a section of $k(mL'-A')$. By construction $s|_{Y_{2}},s'$ glue along $Y_{1}\cap Y_{2}\subseteq D$ to give a corresponding section of $k(mL-A)$ and the result follows. We may perform this gluing by \autoref{powers}.
\end{proof}

\begin{lemma}\label{red-eq}
	
	Let $X$ be a projective scheme over an excellent Noetherian base with $L$ a nef line bundle on $X$. Suppose that $\BS(L)=\mathbb{E}(L)$ and that $Z$ is closed subscheme of $X$ with $\mathbb{E}(L) \subseteq Z$. Then $\BS(L|_{Z})=\mathbb{E}(L|_{Z})$.
	
\end{lemma}

\begin{proof}
	
	Choose $m> 0$, and $A$ ample on $X$ with $\BS(L)=\BB(mL-A)$ and $\BS(L|_{Z})=\BB((mL-A)|_{Z})$. Then we have that $\BB((mL-A)|_{Z}) \subseteq \BB(mL-A)\cap Z$ by restriction.
	
	On the other hand, since $\mathbb{E}(L) \subseteq Z$, we have that $\mathbb{E}(L|_{Z}) = \mathbb{E}(L)$. Hence we have that $$\BS(L|_{Z})\subseteq \BB((mL-A)|_{Z})\subseteq \BB(mL-A)\cap Z =\mathbb{E}(L) \cap Z =\mathbb{E}(L|_{Z}).$$
	It is always the case that $\mathbb{E}(L|_{Z}) \subseteq \BS(L|_{Z})$ and hence equality holds.

\end{proof}

\begin{theorem}\label{ext}
	
	Let $X$ be a projective scheme over an excellent Noetherian base $S$ with $L$ a nef line bundle on $X$. Suppose that $\BS(L|_{X_{\mathbb{Q}}})=\mathbb{E}(L|_{X_{\mathbb{Q}}})$. Then in fact $\BS(L)=\mathbb{E}(L)=\BS(L|_{X_{red}})$.
	
\end{theorem}

\begin{proof}
	
	It is immediate that $\mathbb{E}(L)\subseteq \BS(L)$. Since $\mathbb{E}(L)=\mathbb{E}(L|_{X_{red}})$ it suffices to show only that $\BS(L) \subseteq \mathbb{E}(L)$. We may assume therefore that $\mathbb{E}(L) \neq X$ and $L$  is big, or the result follows immediately.
	
	The proof will be by Noetherian induction. So we assume that the result holds on every proper closed subscheme of $X$. The question is local on the base, so we may assume that $S$ is a $\mathbb{Z}_{(p)}$ scheme for some $p > 0$. Note that by \autoref{red-eq} we have that $\mathbb{E}(L|_{X_{red,\mathbb{Q}}})=\BS(L|_{X_{red,\mathbb{Q}}})$\\
	\\
	\textbf{Step 1: Find a non-vanishing section $t$ of $mL-A$.}\\
	
	Take $A$ ample and $m > 0$ with $\SB(mL-A)=\BS(L)$ and $\SB((mL-A)|_{X_{red}})=\BS(L|_{X_{red}})$. Then we have $\mathbb{E}(L|_{X_{red,\mathbb{Q}}})=\SB((mL-A)|_{X_{red,\mathbb{Q}}})$ also. Suppose first that $\SB((mL-A)|_{X_{\mathbb{Q}}}) \neq X_{\mathbb{Q}}$. Then there is some non-zero section $t$ of $mL-A$ which does not vanish everywhere on $X_{red}$.
	
	Otherwise we have $\mathbb{E}(L)=\SB((mL-A)|_{X_{red,\mathbb{Q}}}) = X_{red}$, that is $$H^{0}(X_{red,\mathbb{Q}},k(mL-A)|_{X_{red,\mathbb{Q}}}) =0$$ for all $k$. Since $\mathbb{E}(L|_{X_{red}})=\mathbb{E}(L) \neq X$, $L|_{X_{red}}$ is still big. Now by \autoref{bigsecs} there is a section $s\in H^{0}(X_{red},(mL-A)|_{X_{red}})$ which does not vanish on any component on which $L|_{X_{red}}$ is big. In particular it does not vanish everywhere. Then since $H^{0}(X_{red,\mathbb{Q}},(mL-A)|_{X_{red,\mathbb{Q}}}) =0$ we may use \cite[Proposition 3.5]{witaszek2020keel} to lift $s$ to a section $t$ of $H^{0}(X,p^{e}(mL-A))$ for some $e > 0$ with $t|_{X_{red}}=s^{p^{e}}$. After replacing $L$ and $A$ with their $p^{e^{th}}$ powers, $t$ is precisely the non-vanishing section we seek.\\
	\\
	\textbf{Step 2: Reduce to $Z=Z(t)$.}\\
	
	By construction we have $\mathbb{E}(L)\subseteq Z$, since $\BS(L) \subseteq Z$. By \autoref{red-eq}, then, we have that $\BS(L|_{kZ_{\mathbb{Q}}})=\mathbb{E}(L|_{kZ_{\mathbb{Q}}})$ for $k \geq 1$, so the hypotheses of the theorem are still satisfied by $kZ$. Hence by the induction hypotheses we may assume $\BS(L|_{kZ})=\mathbb{E}(L|_{kZ})=\BS(L|_{Z_{red}})$ for all $k \geq 1$. Therefore we can apply \autoref{reduce} to deduce the result.
\end{proof}

\begin{remark}
	It is not clear in what generality the assumptions of this theorem should hold. Certainly if $S_{\mathbb{Q}}$ is a field they hold by \cite{birkar2017augmented}. Even when $S_{\mathbb{Q}}$ is of finite type over a field however it is not known whether the condition holds. The arguments of \cite{birkar2017augmented} do not hold in this relative setting as they rely heavily on certain cohomology groups being vector spaces over a field. One possible remedy, when $S_{\mathbb{Q}}$ is of finite type over a field, is to find a suitable compactification and reduce to the case that $X_{\mathbb{Q}}$ is projective over a field.
\end{remark}

\begin{lemma}\label{Case:SA}
	Let $X$ be a projective scheme over an excellent base $S$. Suppose that $L$ is a semiample line bundle, inducing $\pi:X \to Y$ with $\pi_{*}\ox=\mathcal{O}_{Y}$. Then we have equalities \[\mathbb{E}(L)=\BS(L)=\text{Exc}(\pi)\]
	where $\text{Exc}(\pi)$ is the union of closed, integral subschemes $Z \subseteq X$ such that $Z \to \pi(Z)$ is not an isomorphism at the generic point.
\end{lemma}

\begin{proof}
	
	The morphism $\pi$ is proper and it's own Stein factorisation. So by Zariski's Main Theorem \stacks{03GW}, $\text{Exc}(\pi)$ is precisely the complement of the locus on which $\pi$ is finite, or equally the locus on which it has finite fibres.
	
	After replacing $L$ with a multiple we have $L=\pi^{*}A$ for some ample $A$ on $Y$.
	
	Take any hyperplane $H$ on $X$, let $\mathcal{I}=\pi_{*}\ox(-H)$ be the ideal sheaf induced on $Y$, so that we have $\pi_{*}(\ox(kL-H))=\ox[Y](kA)\otimes \mathcal{I}$. 
	
	Suppose that $x \in X \setminus \text{Exc}(\pi)$, then we may assume $H$ does not contain $x$ and so the co-support of $I$ does not contain $\pi(x)$. Choose $k>>0$ such that $\ox[Y](kA)\otimes I$ is globally generated. Hence there is a section $s \in H^{0}(Y,\ox[Y](kA)\otimes \mathcal{I})$ not vanishing at $\pi(x)$.
	
	However by adjunction we have natural isomorphisms $$H^{0}(Y,\ox[Y](kA)\otimes \mathcal{I}) \simeq H^{0}(Y,\pi_{*}(\ox(kL-H)))\simeq H^{0}(X,kL-H).$$ The corresponding section $s' \in H^{0}(X,kL-H)$ does not vanish at $x$ by construction.
	
	Hence we have inclusions $\mathbb{E}(L)\subseteq \BS(L)\subseteq \text{Exc}(\pi)$ and it remains to show that $\text{Exc}(\pi) \subseteq \mathbb{E}(L)$. More precisely it is enough to show that if $V$ is any closed, integral subscheme of $X$ such that $L|_{V}$ is big then $V \to \pi(V)$ is generically an isomorphism. 
	
	Suppose then that $L'=L|_{V}$ is big, so we have a section $s$ of $kL'-A$ for $k>>0$ and $A$ ample on $V$. Since $V$ is integral, by assumption, this induces an inclusion $\ox[V](A) \hookrightarrow \ox[V](kL')$. Now $\pi_{V}:V \to \pi(V)$ is generically an isomorphism if and only if it is generically finite, and hence if and only if it's Stein factorisation is so. Therefore we may freely replace $\pi_{V}$ with its Stein factorisation and assume that $\pi_{V}$ is induced by generating sections of $kL'$. Then the inclusion $\ox[V](A) \hookrightarrow \ox[V](kL')$ ensures that $\pi_{V}$ is generically an isomorphism, completing the proof.
\end{proof}

\begin{corollary}\label{main1}
	Let $X$ be a projective scheme over an excellent Noetherian base $S$ with $L$ a nef line bundle on $X$. 
	Suppose that one of the following holds:
	\begin{enumerate}
		\item $S_{\mathbb{Q}}$ has dimension $0$;
		\item $L|_{X_{\mathbb{Q}}}$ is semiample;
	\end{enumerate}
	Then $\BS(L)=\mathbb{E}(L)$.
\end{corollary}

\begin{proof}

	By \autoref{ext}, it is enough to know $\BS(L|_{X_{\mathbb{Q}}})=\mathbb{E}(L|_{X_{\mathbb{Q}}})$. In case $(1)$ this follows from \cite[Theorem 1.3]{birkar2017augmented}, since each connected component of $X_{\mathbb{Q}}$ is projective over a field. In case $(2)$ this is the content of \autoref{Case:SA}.
\end{proof}

\printbibliography

\end{document}